\theoremstyle{plain}
\newtheorem{thm}{\protect\theoremname}
\theoremstyle{definition}
\newtheorem{defn}[thm]{\protect\definitionname}
\theoremstyle{remark}
\newtheorem{rem}[thm]{\protect\remarkname}
\theoremstyle{definition}
\newtheorem{example}[thm]{\protect\examplename}
\let\myFoot\footnote
\renewcommand{\footnote}[1]{\myFoot{#1\vspace{3mm}}}
\providecommand{\definitionname}{Definition}
\providecommand{\examplename}{Example}
\providecommand{\remarkname}{Remark}
\providecommand{\theoremname}{Theorem}
\providecommand{\theoremname}{Proposition}
\title{On the boundedness of Gross' solution when the underlying distribution is bounded}
\author{\footnote{
corresponding author, and e-mail:} 
\\$^1$Department of Mathematics and Statistics\\King Fahd University of Petroleum and Minerals\\Dhahran 31261, Saudi Arabia\\
 }
\providecommand{\definitionname}{Definition}
\providecommand{\examplename}{Example}
\providecommand{\remarkname}{Remark}
\providecommand{\theoremname}{Theorem}
\begin{document}
\title{On the boundedness of Gross' solution to the planar Skorokhod embedding
problem}
\author{Maher Boudabra, Dhaker Kroumi, Boubaker Smii \thanks{King Fahd University of Petroleum and Minerals }}
\maketitle
\begin{abstract}
In this work, we investigate the problem of the boundedness of the
Gross' solutions of the planar Skorokhod embedding problem, where
we show that the solution is bounded under some mild conditions on
the underlying probability distribution. 
\end{abstract}
\noindent \textbf{Keywords and phrases}: Planar Brownian motion, Skorokhod
embedding problem

\noindent \textbf{Mathematics Subject Classification (2010)}:

\section{Introduction}

In $2019$, the author R. Gross considered an interesting planar version
of the Skorokhod problem \cite{gross2019}, which was originally formulated
in $1961$ but in dimension one. For a concise survey of the one-dimensional
14 version, see \cite{Obloj2004}. The problem studied by Gross is
as follows : Let $\mu$ be a non-degenerate probability distribution
with zero mean and finite second moment. Is there a simply connected
domain $U$ (containing the origin) such that, for a $Z_{t}=X_{t}+Y_{t}i$
is a standard planar Brownian motion, then $X_{\tau}=\Re(Z_{\tau})$
has the distribution $\mu$? Here $\tau$ is the exit time of the
planar Brownian motion $Z_{t}$ from $U$. Gross provided an affirmative
answer, offering an explicit construction of his solution. In addition,
he showed that the underlying exit time $\tau$ has a finite average.
One year later, Boudabra and Markowsky published two papers on the
same problem \cite{boudabra2019remarks,Boudabra2020}. In the first
one, the authors demonstrated that the problem is solvable for any
non-degenerate distribution of a finite $p^{th}$moment where $p>1$.
Furthermore, they provided a uniqueness criterion. The second paper
provides a new category of domains that solve yet the Skorokhod embedding
problem as well as a uniqueness criterion. As in \cite{Boudabra2020},
we shall keep using the terminology $\mu$-domain to tag any simply
connected domain solving the planar Skorokhod problem. As this manuscript
deals with Gross' solution, we confine ourselves to it, that is, a
$\mu$-domain means simply constructed by Gross' technique. Let\textquoteright s
first summarize the geometric characteristics of the $\mu$-domains
generated by Gross\textquoteright{} method: : 
\begin{itemize}
\item $U$ is symmetric over the real line. 
\item $U$ is $\Delta$-convex, i.e the segment joining any point of $U$
and its symmetric point over the real axis remains inside $U$. 
\item If $\mu(\{x\})>0$ then $\partial U$ contains a vertical line segment,
a half line, or a line. 
\item If the support of the distribution $\mu$ has a gap from $a$ to $b$
then $U$ contains the vertical strip $(a,b)\times\mathbb{R}$. 
\end{itemize}
Note that the last two properties are universal, i.e they apply to
any potential solution of the planar Skorokhod embedding problem.
When it comes to boundedness, which is the focus of this note, any
$\mu$-domain $U$ is unbounded whenever the support of $\mu$ is
either unbounded or contains a gap. Specifically, $U$ will be horizontally
unbounded when the support of $\mu$ is unbounded, and vertically
unbounded if there is a gap within the support of $\mu$. Thus, two
necessary conditions for obtaining a bounded $\mu$-domain are the
support of $\mu$ must be both bounded and connected (without gaps).
Given these two assumptions, we will explore sufficient conditions
on $\mu$ that lead a bounded $\mu$-domain.

\section{Tools and Results}

We begin by outlining the ingredients of Gross' technique to generate
his $\mu$-domain, a solution to the planar Skorokhod embedding problem. 

The first one is the quantile function of $\mu$ defined by 
\begin{equation}
u\in(0,1)\mapsto q(u)=F^{-1}(u)=\inf\{x\mid F(x)\geq u\},\label{quantile-1-1}
\end{equation}
 where $F$ is the cumulative distribution function of $\mu$, i.e
$F(x)=\mu((-\infty,x])$, $x\in\mathbb{R}$. In other words, $q$
is the pseudo-inverse of $F$. When $F$ is increasing then $q$ simplifies
to the standard inverse function. A handy feature of $q$ is that,
when fed with uniformly distributed inputs in $(0,1)$, it generates
values sampling as $\mu$. Note that if $\mu$ has a gap, say $(a,b)$,
then $q$ jumps by $b-a$ at $u=F(a)$. The ``doubled periodic function''
$\phi$ is extracted out of $q$ by setting
\[
\theta\in(-\pi,\pi)\mapsto\phi(\theta)=q({\textstyle \frac{\vert\theta\vert}{\pi}}).
\]
Remark that the function $\phi$ is even and non-decreasing. 

The second ingredient is the periodic Hilbert transform, which will
control the range of the projection of the $\mu$-domain on the imaginary
axis. 
\begin{defn}
The Hilbert transform of a $2\pi$- periodic function $f$ is defined
by 
\[
H\{f\}(x):=PV\left\{ \frac{1}{2\pi}\int_{-\pi}^{\pi}f(x-t)\cot(\frac{t}{2})dt\right\} =\lim_{\eta\rightarrow0}\frac{1}{2\pi}\int_{\eta\leq|t|\leq\pi}f(x-t)\cot(\frac{t}{2})dt
\]
where $PV$ denotes the Cauchy principal value \cite{butzer1971hilbert}.
The role of $PV$ is to absorb infinite limits near singularities
in a certain sense. It is required for the Hilbert transform as the
trigonometric function $t\longmapsto\cot(\cdot)$ has poles at $k\pi$
with $k\in\mathbb{Z}$. The Hilbert transform is a bounded operator
on 
\[
L_{2\pi}^{p}=\{f:2\pi\text{-periodic function}\mid\int_{-\pi}^{\pi}\vert f(\theta)\vert^{p}d\theta<+\infty\}
\]
 for any $p>1$. More precisely, we have 
\end{defn}

\begin{thm}
\cite{butzer1971hilbert} If $f$ is in $L_{2\pi}^{p}$, then $H\{f\}$
exists almost everywhere for $p>1$. Furthermore, we have
\begin{equation}
\Vert H\{f\}\Vert_{p}\leq\lambda_{p}\Vert f\Vert_{p}\label{strong inequality}
\end{equation}
for some positive constant $\lambda_{p}$.
\end{thm}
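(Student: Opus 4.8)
The plan is to recognize $H$ as the operator of harmonic conjugation and to establish \eqref{strong inequality} by the Calder\'on--Zygmund paradigm: prove boundedness on $L_{2\pi}^{2}$ directly, verify that $\cot(t/2)$ is a Calder\'on--Zygmund kernel, deduce the weak-type $(1,1)$ estimate, and then interpolate and dualize to reach every $p>1$. First I would pass to the frequency side. Writing $f\sim\sum_{n\in\mathbb{Z}}c_{n}e^{in\theta}$, a direct evaluation of the principal-value integral against $\cot(t/2)$ shows that $H$ acts as the Fourier multiplier
\[
H\{f\}\sim\sum_{n\in\mathbb{Z}}(-i\,\mathrm{sgn}(n))\,c_{n}e^{in\theta},
\]
that is, $H$ is conjugation up to the constant term. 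Since the multiplier has modulus at most $1$, Parseval's identity gives $\Vert H\{f\}\Vert_{2}\leq\Vert f\Vert_{2}$ immediately, so \eqref{strong inequality} holds with $\lambda_{2}=1$, and on this space the almost-everywhere existence is clear from the $L^{2}$-convergence of the conjugate partial sums.

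Next I would treat $H$ as a singular integral with kernel $K(t)=\frac{1}{2\pi}\cot(t/2)$. From the Laurent expansion $\cot(t/2)=\frac{2}{t}+O(t)$ near the origin one reads off the size bound $|K(t)|\lesssim 1/|t|$ and the smoothness (H\"ormander) bound $|K'(t)|\lesssim 1/|t|^{2}$ for $0<|t|\leq\pi$. With the $L^{2}$ bound already in hand, I would run the Calder\'on--Zygmund decomposition of an arbitrary $f\in L_{2\pi}^{1}$ at height $\alpha$, splitting $f=g+b$ into a good part $g$ (controlled in $L^{2}$ and handled by Chebyshev's inequality together with the $L^{2}$ bound) and a bad part $b=\sum_{j}b_{j}$ supported on disjoint arcs with mean zero. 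The cancellation of each $b_{j}$ combined with the H\"ormander condition controls $H\{b\}$ off the dilated arcs, and summing yields the weak-type estimate $|\{\,|H\{f\}|>\alpha\,\}|\lesssim\Vert f\Vert_{1}/\alpha$.

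Finally I would interpolate and dualize. The Marcinkiewicz interpolation theorem applied between the weak-$(1,1)$ bound and the strong $L^{2}$ bound yields \eqref{strong inequality} for every $1<p\leq 2$. Because the kernel is odd, the formal adjoint of $H$ is $-H$, so boundedness on $L^{p}$ is equivalent to boundedness on $L^{p'}$ for the conjugate exponent $p'$; applying this to $1<p'\leq 2$ extends the inequality to all $2\leq p<\infty$, completing the range $p>1$. For the almost-everywhere existence of the principal value at general $p$, I would control the maximal truncated transform $H_{*}\{f\}=\sup_{\eta>0}\left|\frac{1}{2\pi}\int_{\eta\leq|t|\leq\pi}f(x-t)\cot(t/2)\,dt\right|$ by a Cotlar-type inequality, verify pointwise convergence on the dense class of trigonometric polynomials, and pass to the limit by the standard maximal-function (Banach principle) argument.

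I expect the weak-type $(1,1)$ estimate to be the main obstacle. The delicate point is extracting enough decay from the mean-zero bad pieces $b_{j}$ using only the first-order smoothness of $\cot(t/2)$, since the $L^{2}$ bound by itself cannot detect the endpoint behaviour near $p=1$, and the entire range of small exponents $p$ close to $1$ produced by the interpolation rests on this single estimate.
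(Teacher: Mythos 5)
The paper offers no proof of this statement at all: it is M.~Riesz's classical conjugate-function theorem, quoted verbatim from the cited monograph of Butzer and Nessel, so there is no internal argument to compare yours against. Judged on its own terms, your outline is the standard real-variable Calder\'on--Zygmund proof and is correct in all essentials: the multiplier identification $H\colon c_{n}\mapsto -i\,\mathrm{sgn}(n)c_{n}$ giving $\lambda_{2}=1$, the kernel bounds $|K(t)|\lesssim |t|^{-1}$ and $|K'(t)|\lesssim |t|^{-2}$ from $\cot(t/2)=\frac{2}{t}+O(t)$, the Calder\'on--Zygmund decomposition on arcs with the mean-zero cancellation of the bad pieces against the H\"ormander condition, Marcinkiewicz interpolation for $1<p\leq 2$, duality via the oddness of the kernel for $2\leq p<\infty$, and Cotlar's inequality plus density of trigonometric polynomials for almost-everywhere existence. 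It is worth knowing that the route taken in the classical sources (including, essentially, the one the paper cites) is different: one extends $f$ harmonically into the disc, forms the analytic completion $F=u+i\widetilde{u}$, and applies Cauchy's theorem or subharmonicity to $F^{p}$ (first for $f\geq 0$ and $1<p\leq 2$, then removing the restrictions), together with Privalov's theorem on nontangential boundary values for the almost-everywhere statement. That complex-analytic argument is shorter and more elementary on the circle; your real-variable scheme costs more machinery but buys generality, since nothing in it uses analyticity and it transfers to general Calder\'on--Zygmund kernels. One small correction: your parenthetical claim that almost-everywhere existence at $p=2$ ``is clear from the $L^{2}$-convergence of the conjugate partial sums'' is not right as stated --- norm convergence yields almost-everywhere convergence only along a subsequence, and it concerns the partial sums rather than the truncated principal-value integrals; the maximal-function argument you correctly describe at the end is what actually delivers existence of the principal value, at $p=2$ as well as elsewhere, so you should route the $p=2$ case through it too.
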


The strong type estimate \ref{strong inequality} fails to hold when
$p=1$, as $H$ becomes unbounded. For further details see \cite{mcgovern1980hilbert,duren2000theory}
\cite{butzer1971hilbert,king2009hilbert}. 

Now we illustrate Gross' construction technique. He first generates
the Fourier series expansion of $\phi$: 
\[
\phi(\theta)=\sum_{n=1}^{+\infty}a_{n}\cos(n\theta),
\]
where $a_{n}$ is the $n^{th}$ Fourier coefficient of $\phi$. Note
that there is no constant term due the fact that $\mu$ is assumed
to be a centered probability distribution. Then he showed his cornerstone
result, upon which the solution is built. More precisely
\begin{thm}
\cite{gross2019} The analytic function 
\[
\Phi(z)=\sum_{n=1}^{+\infty}a_{n}z^{n}
\]
is univalent on the unit disc $\mathbb{D}$. 
\end{thm}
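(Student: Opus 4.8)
The plan is to prove something slightly stronger than univalence, namely that $\Phi$ maps $\mathbb{D}$ onto a domain convex in the direction of the imaginary axis, and to extract univalence from the close-to-convexity (Kaplan) criterion. Recall that the strip map $\ell(z)=\tfrac12\log\frac{1+z}{1-z}$ is convex and univalent with $\ell'(z)=1/(1-z^2)$. Hence it suffices to show that $\Phi'(z)/\ell'(z)=(1-z^2)\Phi'(z)$ has real part of one fixed sign throughout $\mathbb{D}$: if $\Re[(1-z^2)\Phi'(z)]$ keeps a constant (say nonpositive) sign and is not identically zero, then after a harmless change of sign $\Phi$ (resp. $-\Phi$) is close-to-convex with respect to $\ell$, hence univalent, and the local univalence $\Phi'\neq0$ comes for free since $1-z^2$ does not vanish on $\mathbb{D}$.

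First I would carry out the boundary computation, which is where the hypotheses on $\phi$ enter. Writing $z=e^{i\theta}$ and $\Phi(e^{i\theta})=\phi(\theta)+i\psi(\theta)$, with $\psi=\sum_{n}a_{n}\sin(n\theta)$ the conjugate function of $\phi$, one differentiates $\frac{d}{d\theta}\Phi(e^{i\theta})=ie^{i\theta}\Phi'(e^{i\theta})=\phi'(\theta)+i\psi'(\theta)$ and uses $1-e^{2i\theta}=-2i\sin\theta\,e^{i\theta}$ to get
\[
(1-z^2)\Phi'(z)\big|_{z=e^{i\theta}}=-2\sin\theta\,\phi'(\theta)-2i\sin\theta\,\psi'(\theta),
\]
so that $\Re[(1-z^2)\Phi'(z)]=-2\sin\theta\,\phi'(\theta)$ on the unit circle. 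Because $\phi$ is even and non-decreasing on $[0,\pi]$, the factors $\sin\theta$ and $\phi'(\theta)$ have the same sign for every $\theta\in(-\pi,\pi)$, whence $-2\sin\theta\,\phi'(\theta)\le 0$ almost everywhere on $\partial\mathbb{D}$. This nonpositivity of the boundary real part is the analytic heart of the matter, and it is exactly the place where the monotonicity of the quantile function is used.

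It then remains to propagate the boundary inequality to the interior. Since $\Re[(1-z^2)\Phi'(z)]$ is harmonic and its boundary values are $\le 0$, the maximum principle (via the Poisson representation) gives $\Re[(1-z^2)\Phi'(z)]\le 0$ throughout $\mathbb{D}$, and the strong maximum principle upgrades this to a strict inequality in the open disc as soon as $\phi$ is non-constant, which is guaranteed by the non-degeneracy of $\mu$. Combined with the reduction above, this yields univalence and, as a geometric payoff, convexity of $\Phi(\mathbb{D})$ in the vertical direction, consistent with the $\Delta$-convexity and vertical structure of the $\mu$-domain recorded in the introduction.

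The step I expect to be the main obstacle is the regularity needed to legitimize the Poisson representation: for a general $\mu$ with only a finite second moment, $\phi$ is merely of bounded variation and $\phi'$ is a measure, so $\Phi'$ need not extend continuously to $\overline{\mathbb{D}}$ and $(1-z^2)\Phi'$ need not lie in $H^1$. I would circumvent this by approximation. Replacing the quantile $q$ by smooth, strictly increasing functions $q_k\to q$ in $L^2(0,1)$ produces even, non-decreasing, smooth boundary functions $\phi_k$ whose analytic completions $\Phi_k$ extend smoothly to $\overline{\mathbb{D}}$; for these the boundary computation and the ordinary maximum principle apply verbatim, so each $\Phi_k$ is univalent. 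Since $L^2$-convergence of $q_k$ forces $\ell^2$-convergence of the Fourier coefficients and hence locally uniform convergence $\Phi_k\to\Phi$ on $\mathbb{D}$, Hurwitz's theorem implies that $\Phi$ is either univalent or constant, and non-degeneracy of $\mu$ excludes the constant case. Verifying that the approximants remain genuinely univalent and that the convergence is strong enough for Hurwitz's theorem are the two checkpoints that carry the real weight of the argument.
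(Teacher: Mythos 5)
The paper itself offers no proof of this theorem; it is imported verbatim from Gross \cite{gross2019}, so the relevant comparison is with Gross's original argument. Your proposal is correct in outline and rests on exactly the same analytic heart as that argument: the sign of $\Re[(1-z^{2})\Phi'(z)]$, fed by the monotonicity of $\phi$ on $(0,\pi)$, combined with a convexity-in-one-direction criterion (Gross cites the Hengartner--Schober/Royster--Ziegler theorem, while your strip-map reduction to the Noshiro--Warschawski theorem proves precisely the sufficiency half you need, which is fine). Where you genuinely diverge is the passage from boundary to interior: Gross gets the interior inequality directly, with no smoothing at all, by integrating by parts in the Fourier coefficients, $na_{n}=-\frac{1}{\pi}\int_{-\pi}^{\pi}\sin(nt)\,d\phi(t)$, summing the resulting series to obtain
\[
(1-z^{2})\Phi'(z)=-\frac{2}{\pi}\int_{0}^{\pi}\frac{(1-z^{2})\sin t}{(1-ze^{it})(1-ze^{-it})}\,d\phi(t),
\]
and using the identity $\frac{1-z^{2}}{(1-ze^{it})(1-ze^{-it})}=\frac{1}{2}\bigl[\frac{1+ze^{it}}{1-ze^{it}}+\frac{1+ze^{-it}}{1-ze^{-it}}\bigr]$, whose real part is positive on $\mathbb{D}$; since $d\phi\geq0$ on $(0,\pi)$, this yields $\Re[(1-z^{2})\Phi'(z)]\leq0$ throughout $\mathbb{D}$ for an arbitrary non-decreasing $\phi$, atoms and jumps included. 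Your approximation detour can be made to work, but one of the two checkpoints you flag hides a real trap: mollifying $q$ (or $\phi$) does not automatically preserve the sign of $\sin\theta\,\phi_{k}'(\theta)$ near the fold points $\theta=0,\pm\pi$, where the even reflection can create small decreasing stretches, so "smooth, strictly increasing $q_{k}$" needs care (e.g.\ flat derivatives at the endpoints of $(0,1)$). The canonical safe regularization is the Abel mean: $P_{r}*\phi$ is even and, by the Poisson-kernel comparison $P_{r}(\theta-t)\geq P_{r}(\theta+t)$ for $\theta,t\in(0,\pi)$, remains non-decreasing on $(0,\pi)$, and its analytic completion is just the dilation $\Phi(rz)$, which is analytic on $\overline{\mathbb{D}}$, so your boundary computation and maximum principle apply verbatim; univalence of every dilation gives univalence of $\Phi$ on all of $\mathbb{D}$ outright, and Hurwitz's theorem is then not even needed. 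In short: same key lemma and same monotonicity input as the source, a heavier but repairable route to the interior inequality; Gross's Stieltjes--Herglotz representation buys uniformity over all monotone $\phi$ in one stroke, while your version buys a self-contained proof using only the maximum principle and classical univalence criteria.
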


Using the conformal invariance principal of planar Brownian motion
\cite{morters2010brownian}, Gross shows that $\Phi(\mathbb{D})$,
i.e the image of the $\mathbb{D}$ under the action of $\Phi$, is
a solution for the Skorokhod embedding problem. If one knows that
$H\{\cos\}=\sin,$ then the boundary of his $\mu$-domain is parameterized
by 
\begin{equation}
\theta\in(-\pi,\pi)\mapsto(\phi(\theta),H\{\phi\}(\theta)).\label{eq:parametrized}
\end{equation}
For a Cartesian equation of \ref{eq:parametrized}, see \cite{Boudabra2020}. 

Now we state our first result. Let $\mu$ be a continuous probability
distribution concentrated on an interval $(a,b)$. Denote its density
by $\rho$ . In particular, the quantile function $q$ simplifies
to the standard inverse of $F$. We state now our first theorem. 
\begin{thm}
\label{bounded deriv theorem} If $\inf_{x\in(a,b)}\rho(x)$ is positive
then the underlying $\mu$-domain is bounded. 
\end{thm}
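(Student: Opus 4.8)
The plan is to show that both coordinates of the boundary parameterization \eqref{eq:parametrized}, namely $\theta\mapsto(\phi(\theta),H\{\phi\}(\theta))$, are uniformly bounded, and then to transfer this to the domain $U=\Phi(\mathbb{D})$ by way of the maximum principle for harmonic functions. Controlling the first coordinate is immediate: since $\mu$ is concentrated on $(a,b)$, the quantile function takes values in $(a,b)$, so $\phi(\theta)=q(|\theta|/\pi)$ is bounded, with $|\phi|\le\max(|a|,|b|)$. All the real work lies in the second coordinate $H\{\phi\}$, because the $L^p$ estimate \eqref{strong inequality} quoted above does \emph{not} by itself yield uniform boundedness.

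The first step would be to extract Lipschitz regularity of $\phi$ from the hypothesis $m:=\inf_{x\in(a,b)}\rho(x)>0$. Rather than differentiate $q$, I would argue directly: for $x<y$ in $(a,b)$ one has $F(y)-F(x)=\int_x^y\rho\ge m(y-x)$, so $F$ expands distances by a factor $m$. Applying this with $x=q(u)$, $y=q(v)$ for $u<v$ gives $v-u=F(q(v))-F(q(u))\ge m\bigl(q(v)-q(u)\bigr)$, i.e. $q$ is Lipschitz with constant $1/m$. Since $\theta\mapsto|\theta|/\pi$ is $\tfrac{1}{\pi}$-Lipschitz, the composition $\phi(\theta)=q(|\theta|/\pi)$ is Lipschitz with constant $\tfrac{1}{\pi m}$; the corners at $\theta=0$ and, after the even $2\pi$-periodic extension, at $\theta=\pm\pi$ do not spoil this bound.

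The second and main step is to upgrade this regularity into a genuine $L^\infty$ bound on $H\{\phi\}$. The device is to subtract a constant inside the principal value: as $\cot(t/2)$ is odd, $PV\!\int_{-\pi}^{\pi}\cot(t/2)\,dt=0$, and therefore
\[
H\{\phi\}(x)=\frac{1}{2\pi}\int_{-\pi}^{\pi}\bigl[\phi(x-t)-\phi(x)\bigr]\cot\!\left(\tfrac{t}{2}\right)dt,
\]
where the integral is now absolutely convergent and needs no $PV$. Near $t=0$ the kernel obeys $|\cot(t/2)|\le C/|t|$, while the Lipschitz bound gives $|\phi(x-t)-\phi(x)|\le\tfrac{1}{\pi m}|t|$, so the integrand is bounded by $\tfrac{C}{\pi m}$ close to the singularity; away from $t=0$ the kernel is bounded and $\phi$ is bounded. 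Thus the integrand is dominated by a constant independent of $x$, yielding $\|H\{\phi\}\|_\infty\le C'$ for some $C'<\infty$. This is the crux of the argument: it is exactly where the hypothesis $\inf\rho>0$ is used, and where one must go beyond the $L^p$ theory. (The same Lipschitz, hence Dini, regularity also guarantees that $H\{\phi\}$ is continuous and that $\Phi$ extends continuously to $\overline{\mathbb{D}}$, a point I would invoke to apply the maximum principle cleanly.)

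Finally I would conclude that $U$ is bounded. Writing $\Phi(re^{i\theta})=\sum a_n r^n e^{in\theta}$, the harmonic functions $\Re\Phi$ and $\Im\Phi$ have boundary values $\phi$ and $H\{\phi\}$ respectively (recall $H\{\cos n\theta\}=\sin n\theta$). Both boundary functions are now bounded and continuous, so $\Phi$ extends continuously to $\overline{\mathbb{D}}$ and the maximum principle bounds $\Re\Phi$ and $\Im\Phi$ throughout $\mathbb{D}$ by $\max(|a|,|b|)$ and $C'$ respectively. Hence $\Phi$ is a bounded map and $U=\Phi(\mathbb{D})$ is a bounded domain, as claimed.
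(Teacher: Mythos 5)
Your proof is correct, but it reaches the uniform bound on $H\{\phi\}$ by a genuinely different mechanism than the paper. The paper also starts from $m=\inf_{x\in(a,b)}\rho(x)>0$, but encodes it as a derivative bound, $\vert\phi'(\theta)\vert=\frac{1}{\pi\rho(\phi(\theta))}\leq\frac{1}{\pi m}$, and then tames the principal value by \emph{integration by parts}: in \eqref{eq1} the kernel $\cot(\frac{t}{2})$ is integrated to $2\ln\vert\sin(\frac{t}{2})\vert$, the boundary terms $2(\phi(\theta+\varepsilon)-\phi(\theta-\varepsilon))\ln\vert\sin(\frac{\varepsilon}{2})\vert$ vanish as $\varepsilon\to0^{+}$, and $H\{\phi\}(\theta)$ is exhibited as an absolutely convergent integral of $\phi'(\theta+t)+\phi'(\theta-t)$ against the integrable kernel $\ln(\sin(\frac{t}{2}))$, which is uniformly bounded since $\phi'$ is. You instead remove the principal value by the oddness of $\cot(\frac{t}{2})$, subtracting $\phi(x)$ inside the integral, and control $\vert\phi(x-t)-\phi(x)\vert\leq\frac{\vert t\vert}{\pi m}$ via a Lipschitz bound extracted directly from $F(y)-F(x)\geq m(y-x)$, with no differentiation of $q$ at all. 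The two devices are equivalent in spirit --- both pair the $1/t$ singularity against the modulus of continuity of $\phi$ --- but yours is slightly more economical and more general: it needs only Lipschitz (indeed Dini) continuity of $\phi$, so it avoids the chain-rule identity for $\phi'$, which tacitly presumes $q$ is differentiable, and it yields an explicit bound of the form $\Vert H\{\phi\}\Vert_{\infty}\leq C/m$. The paper's form has its own payoff: the representation of $H\{\phi\}$ as a convolution of the $L^{\infty}$ function $\phi'$ with the $L^{1}$ kernel $\ln\vert\sin(\frac{t}{2})\vert$ is exactly what the subsequent remark uses to deduce continuity of $H\{\phi\}$. You also make explicit a step the paper leaves tacit: passing from boundedness of the boundary data $(\phi,H\{\phi\})$ in \eqref{eq:parametrized} to boundedness of $U=\Phi(\mathbb{D})$, via the Poisson-integral representation of $\Re\Phi$ and $\Im\Phi$ (your ``maximum principle'' step); your justification that Lipschitz regularity of $\phi$ gives a continuous extension of $\Phi$ to $\overline{\mathbb{D}}$ is sound and makes that appeal clean.
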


\begin{proof}
As $\rho$ is assumed to be positive then $\phi'$ is bounded on $[-\pi,\pi]$
since 
\[
\vert\phi'(\theta)\vert=\frac{1}{\pi}\frac{1}{\vert\rho(\phi(\theta))\vert}\leq\frac{1}{\pi}\frac{1}{\inf_{x\in(a,b)}\rho(x)}.
\]
Let $\theta$ be a fixed number in $(-\pi,\pi)$. The Hilbert transform
of $\phi$ is well defined as $\phi$ is bounded. By splitting the
integral in $H\{\phi\}$ into two parts, we have 
\begin{equation}
H\{\phi\}(\theta)=\frac{1}{2\pi}\lim_{\varepsilon\to0^{+}}\left\{ \int_{-\pi}^{-\varepsilon}\phi(\theta-t)\cot(\frac{t}{2})dt+\int_{\varepsilon}^{\pi}\phi(\theta-t)\cot(\frac{t}{2})dt\right\} .\label{eq1}
\end{equation}
Moreover, using a simple integration by parts, we obtain 
\begin{equation}
\int_{\varepsilon}^{\pi}\phi(\theta-t)\cot(\frac{t}{2})dt=-2\phi(\theta-\varepsilon)\ln(\vert\sin(\frac{\varepsilon}{2})\vert)+2\int_{\varepsilon}^{\pi}\phi'(\theta-t)\ln(\vert\sin(\frac{t}{2})\vert)dt.\label{eq2}
\end{equation}
Similarly, 
\begin{equation}
\begin{split}\int_{-\pi}^{-\varepsilon}\phi(\theta-t)\cot(\frac{t}{2})dt & =2\phi(\theta+\varepsilon)\ln(\vert\sin(\frac{\varepsilon}{2})\vert)+2\int_{-\pi}^{-\varepsilon}\phi'(\theta-t)\ln(\vert\sin(\frac{t}{2})\vert)dt\\
 & =2\phi(\theta+\varepsilon)\ln(\vert\sin(\frac{\varepsilon}{2})\vert)+2\int_{\varepsilon}^{\pi}\phi'(\theta+t)\ln(\vert\sin(\frac{t}{2})\vert)dt,
\end{split}
\label{eq3}
\end{equation}
By substituting \eqref{eq2} and \eqref{eq3} into \eqref{eq1}, the
Hilbert transform becomes 
\begin{equation}
\begin{alignedat}{1}H\{\phi\}(\theta) & =\frac{1}{2\pi}\lim_{\varepsilon\to0^{+}}\left\{ 2\left(\phi(\theta+\varepsilon)-\phi(\theta-\varepsilon)\right)\ln(\vert\sin(\frac{\varepsilon}{2})\vert)\right.\\
 & \left.+2\int_{\varepsilon}^{\pi}\left(\phi'(\theta+t)+\phi'(\theta-t)\right)\ln(\vert\sin(\frac{t}{2})\vert)dt\right\} 
\end{alignedat}
\label{eq:4}
\end{equation}
\end{proof}
Now, since $\phi$ is differentiable at $\theta$ and $\ln(\sin(\frac{\varepsilon}{2}))\underset{0^{+}}{\sim}\ln(\frac{\varepsilon}{2})$,
the first limit in \eqref{eq:4} becomes 
\[
\begin{split}\lim_{\varepsilon\to0^{+}}\left(\phi(\theta+\varepsilon)-\phi(\theta-\varepsilon)\right)\ln(\sin(\frac{\varepsilon}{2})) & =\lim_{\varepsilon\to0^{+}}\frac{\phi(\theta+\varepsilon)-\phi(\theta-\varepsilon)}{\varepsilon}\varepsilon\ln(\sin(\frac{\varepsilon}{2}))\\
 & =\lim_{\varepsilon\to0^{+}}2\phi'(\theta)\varepsilon\ln(\frac{\varepsilon}{2})\\
 & =0.
\end{split}
\]
For the second limit in \eqref{eq:4}, observe that 
\[
\left(\phi'(\theta+t)+\phi'(\theta-t)\right)\ln(\sin(\frac{t}{2}))\underset{0^{+}}{\sim}2\phi'(\theta)\ln(\frac{t}{2}).
\]
On the other hand, the function $t\mapsto\ln(\frac{t}{2})$ is integrable
on $[0,\pi].$ Then 
\[
\lim_{\varepsilon\to0^{+}}\int_{\varepsilon}^{\pi}\left(\phi'(\theta+t)+\phi'(\theta-t)\right)\ln(\sin(\frac{t}{2}))dt
\]
is finite. Therefore, 
\[
H\{\phi\}(\theta)=\int_{0}^{\pi}\left(\phi'(\theta+t)+\phi'(\theta-t)\right)\ln(\sin(\frac{t}{2}))dt
\]
 is finite. 
\begin{rem}
The proof of Theorem \ref{bounded deriv theorem} shows that $H\{\phi\}$
is continuous as it is the convolution between an $L^{\infty}$ function
and an $L^{1}$ function over $(0,\pi)$. 

The case where $\inf_{x\in(a,b)}\rho(x)=0,$i.e $\phi'(c)=\infty$
at some point $c\in[0,\pi],$ is inconclusive. The following two examples
illustrate this fact. The first example generates a bounded $\mu$-domain
while the second example produces an unbounded one.
\end{rem}

\begin{example}
Let $\mu$ be the probability distribution given by the density 
\[
\rho(x)=\frac{\alpha+1}{2}\vert x\vert^{\alpha}\chi_{(-1,1)}
\]
with $\alpha$ being a non-negative parameter. The c.d.f of $\mu$
is 
\[
F(x)=\frac{1}{2}(1-\vert x\vert^{1+\alpha})\chi_{(-1,0)}+\frac{1}{2}+\frac{1}{2}\vert x\vert^{1+\alpha}\chi_{(0,1)}
\]
and thus 
\[
\phi(\theta)=-(1-2\frac{\vert\theta\vert}{\pi})^{\frac{1}{\alpha+1}}\chi(\vert\theta\vert)_{(0,\frac{\pi}{2})}+2^{\frac{1}{\alpha+1}}(\frac{\vert\theta\vert}{\pi}-\frac{1}{2})^{\frac{1}{\alpha+1}}\chi(\vert\theta\vert)_{(\frac{\pi}{2},\pi)}.
\]

\begin{figure}
\begin{centering}
\includegraphics[width=8cm,height=8cm,keepaspectratio]{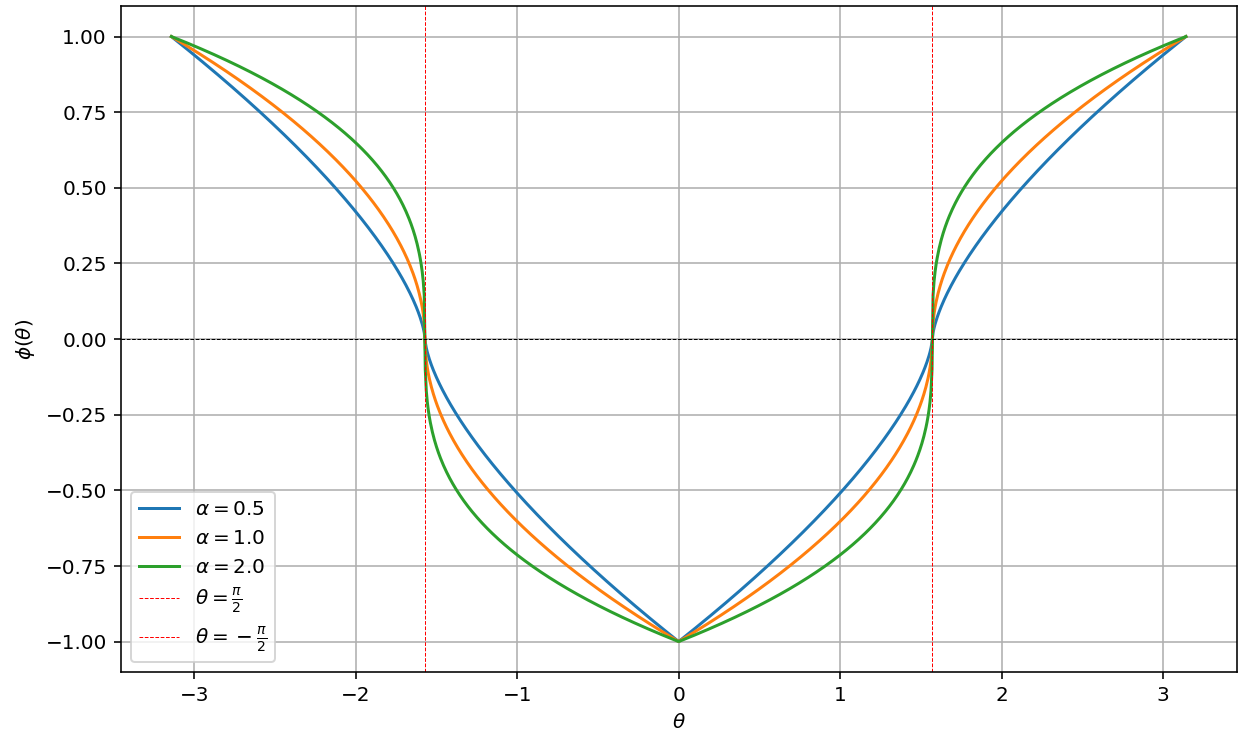} 
\par\end{centering}
\caption{Note that $\phi$ has vertical tangents at $\pm\frac{\pi}{2}$ as
$\phi'(\pm\frac{\pi}{2})=\pm\infty.$}
\end{figure}
\end{example}

Now, as $\cot(t)\underset{0}{\sim}\frac{1}{t}$ then we have the approximation:
\begin{equation}
\phi(t)\cot\left(\frac{\frac{\pi}{2}-t}{2}\right)\underset{\frac{\pi}{2}}{\sim}\frac{2\phi(t)}{\frac{\pi}{2}-t}=-\frac{(\frac{2}{\pi})^{\frac{1}{\alpha+1}}}{\vert t-\frac{\pi}{2}\vert{}^{\frac{\alpha}{\alpha+1}}}.\label{eq:approximation}
\end{equation}
As the R.H.S of \ref{eq:approximation} is integrable around $t=\frac{\pi}{2}$,
the function $\phi(t)\cot\left(\frac{\frac{\pi}{2}-t}{2}\right)$
is also integrable. The case $-\frac{\pi}{2}$ is similar. Hence 
\[
H\{\phi\}(\frac{\pi}{2})=\int_{-\pi}^{\pi}\phi(t)\cot\left(\frac{\frac{\pi}{2}-t}{2}\right)dt
\]
exists and is finite for all $\theta$. 
\begin{figure}
\centering{}\includegraphics[width=7cm,height=7cm,keepaspectratio]{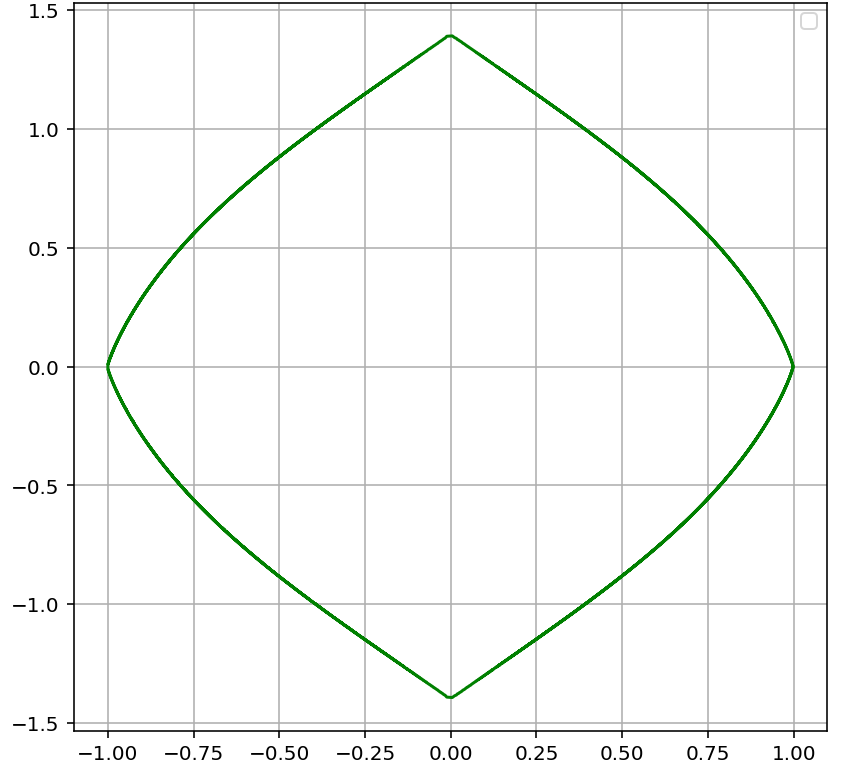}
\caption{The $\mu$-domain is bounded ( $\alpha=1$). }
\label{fig:enter-label} 
\end{figure}

\begin{example}
Before giving the example, we shall first provide the motivation behind.
Theorem \ref{bounded deriv theorem} says that any $\mu$-domain is
necessarily unbounded if $\phi'$ is bounded. That is, if we want
to seek a continuous distribution $\mu$ supported on an interval
$(\alpha,\beta)$ generating an unbounded domain, then necessarily
its pdf $\rho$ must hit the $x$-axis at some point $x_{0}\in(\alpha,\beta)$.
However hitting the value zero by $\rho$ is not enough as shown by
the previous example. Even more, the previous example shows that if
$f(x)\underset{x_{0}}{\sim}\vert x-x_{0}\vert^{\alpha}$ won't do
the job for any $\alpha>0$. So in order to boost the chance of getting
an unbounded domain, $\rho$ must be too much flat around $x_{0}$,
i.e its graph looks like it is overlapped with the $x$-axis at $x_{0}$.
In other words, we need $\rho$ such that
\end{example}

\begin{equation}
\frac{\rho(x)}{\vert x-x_{0}\vert{}^{\alpha}}\underset{x_{0}}{\rightarrow}0\label{eq: flat}
\end{equation}

for any positive $\alpha$. Inspired by this analysis, we shall show
that $\rho(x)=\kappa e^{-\frac{1}{\vert x\vert}}\mathbf{\chi}_{(-1,1)}$
is a suitable candidate to generate an unbounded domain ($\kappa$
being the normalization constant).

\begin{figure}
\begin{centering}
\includegraphics[width=8cm,height=8cm,keepaspectratio]{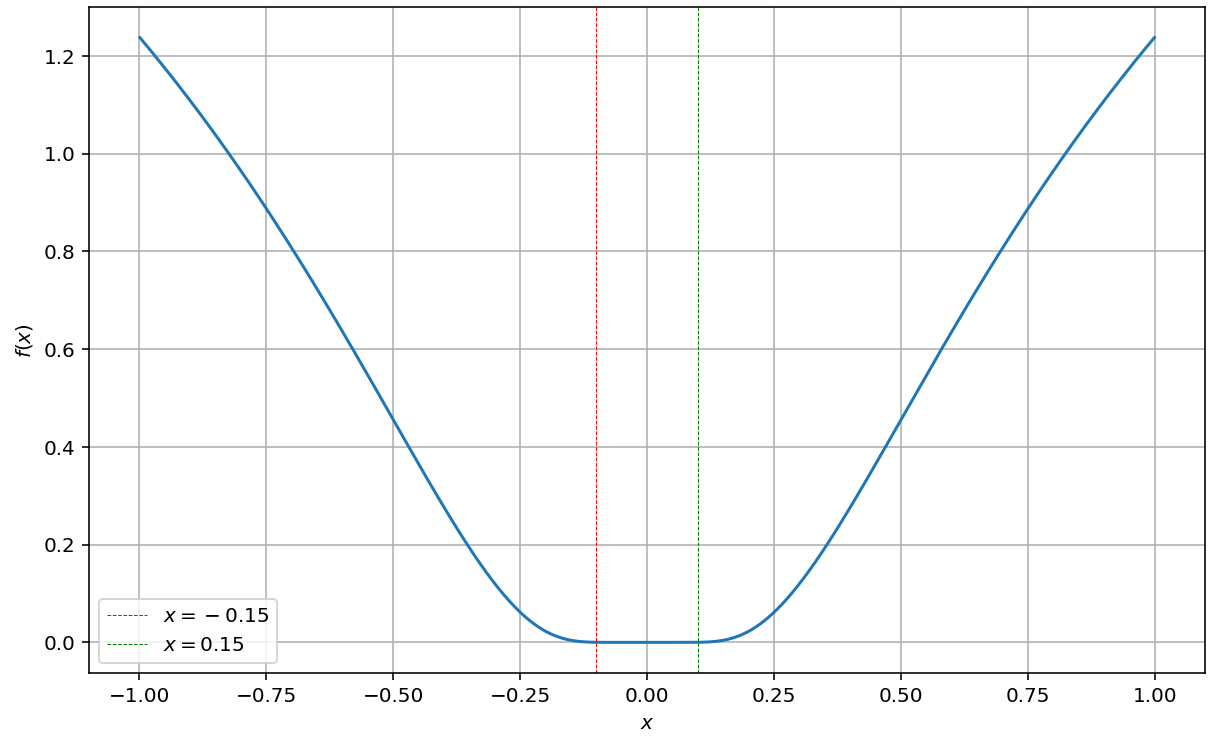} 
\par\end{centering}
\caption{Notice that the p.d.f $\rho(x)=\kappa e^{-\frac{1}{\vert x\vert}}$
is almost zero when $x\in(-0.15,0.15)$. That is, it looks like the
support of $\mu$ has a gap from $-0.15$ to $0.15$, which leads
to the unboundedness of corresponding $\mu$-domain. }
\end{figure}

Since $\rho$ is symmetric, the associated cumulative distribution
function takes the following form: 
\[
F(x)=\begin{cases}
\frac{1}{2}+\int_{0}^{x}ke^{-\frac{1}{t}}dt & x\in(0,1)\\
\frac{1}{2}-\int_{x}^{0}ke^{\frac{1}{t}}dt & x\in(-1,0)
\end{cases}.
\]
We have 
\[
\frac{1}{2}-\kappa e^{\frac{1}{x}}\underset{-1<x<0}{\leq}F(x)\underset{0<x<1}{\leq}\frac{1}{2}+\kappa e^{-\frac{1}{x}}.
\]
An elementary property of inverses infers that 
\[
\frac{1}{\ln(\frac{\frac{1}{2}-u}{\kappa})}\underset{0\leq u\leq\frac{1}{2}}{\geq}q(u)\underset{\frac{1}{2}\leq u\leq1}{\geq}-\frac{1}{\ln(\frac{u-\frac{1}{2}}{\kappa})}.
\]
Hence
\begin{equation}
\phi(t)\cot(\frac{\frac{\pi}{2}-t}{2})\geq-\frac{\cot(\frac{\vert\frac{\pi}{2}-t\vert}{2})}{\ln(\frac{\pi}{\kappa}\vert t-\frac{\pi}{2}\vert)}.\label{eq:inequality}
\end{equation}
The R.H.S of \ref{eq:inequality} is not integrable around $\frac{\pi}{2}$.
Then we deduce that $H\{\phi\}(\frac{\pi}{2})$ blows up, which infers
that $H\{\phi\}$ is unbounded.

\begin{figure}[H]
\begin{centering}
\includegraphics[width=8cm,height=8cm,keepaspectratio]{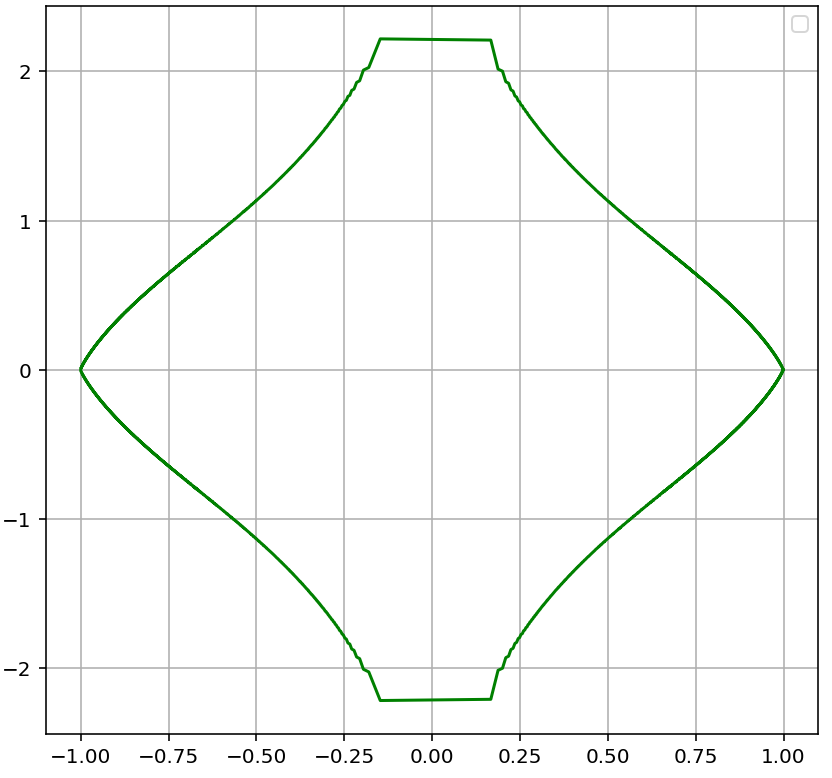}\caption{The $\mu$-domain generated by the probability distribution of density
$\rho(x)=\kappa e^{-\frac{1}{\vert x\vert}}$. Note that the boundary
keeps going vertically to $\pm\infty.$ However, as $e^{-\frac{1}{\vert x\vert}}$
is practically zero near the origin, and hence the \textquotedbl floating-point\textquotedbl{}
problem occurred. }
\par\end{centering}
\end{figure}

\section{Comments}

In this work, we have investigated the problem of the boundedness
of the $\mu$-domains and found some sufficient conditions on the
distribution $\mu$ to generate a bounded domain. In summary, in order
to have a blow-up at some point $x_{0}$, the graph of the quantile
function must be too much steep. This includes the case of support
with a gap. Assume that the support is $(-2,-1)\cup(1,2)$ for example,
the quantile function will have a jump at the point $u=\mathbb{P}\{(-2,-1)\}$.
At this point, the derivative is the Dirac function, which is the
most steep function ever. This explains the unboundedness of the corresponding
$\mu$-domain. An interesting question would be to discuss the necessity
of such conditions, namely the flatness of the p.d.f, i.e can one
find a distribution whose p.d.f satisfies \ref{eq: flat} with a bounded
$\mu$-domai.  \bibliographystyle{plain}
\bibliography{Maher}

\begin{thebibliography}{1}

\bibitem{Boudabra2020}
M.~Boudabra and G.~Markowsky.
\newblock A new solution to the conformal {S}korokhod embedding problem and applications to the {D}irichlet eigenvalue problem.
\newblock {\em Journal of Mathematical Analysis and Applications}, 491(2):124351, 2020.

\bibitem{boudabra2019remarks}
M.~Boudabra and G.~Markowsky.
\newblock Remarks on {G}ross' technique for obtaining a conformal {S}korohod embedding of planar {B}rownian motion.
\newblock {\em Electronic Communications in Probability}, 2020.

\bibitem{butzer1971hilbert}
P.~Butzer and R.~Nessel.
\newblock Hilbert transforms of periodic functions.
\newblock In {\em Fourier Analysis and Approximation}, pages 334--354. Springer, 1971.

\bibitem{duren2000theory}
P.~L Duren.
\newblock {\em Theory of $H^p$ spaces}.
\newblock Courier Corporation, 2000.

\bibitem{gross2019}
R.~Gross.
\newblock A conformal {S}korokhod embedding.
\newblock {\em Electronic Communications in Probability}, 2019.

\bibitem{king2009hilbert}
F.~King.
\newblock {\em Hilbert transforms}.
\newblock Cambridge University Press Cambridge, 2009.

\bibitem{mcgovern1980hilbert}
J~D. McGovern.
\newblock {\em The Hilbert Transform}.
\newblock PhD thesis, 1980.

\bibitem{morters2010brownian}
P.~M{\"o}rters and Y.~Peres.
\newblock {\em Brownian motion}, volume~30.
\newblock Cambridge University Press, 2010.

\bibitem{Obloj2004}
J.~Ob\l\'{o}j.
\newblock The {S}korokhod embedding problem and its offspring.
\newblock {\em Probability Surveys}, pages 321 -- 392, 2004.

\end{thebibliography}

\end{document}